\numberwithin{equation}{section}
\newtheorem{theo}{Theorem}[section]
\newtheorem{prop}[theo]{Proposition}
\newtheorem{claim}[theo]{Claim}
\theoremstyle{definition}
\newtheorem{defi}[theo]{Definition}
\newtheorem{rem}[theo]{Remark}
\numberwithin{equation}{section}
\newcommand{\calF}{\mathcal F}
\newcommand{\PP}{\mathbb P}
\renewcommand{\SS}{\mathbb S}
\newcommand{\gap}{\mathbf A}
\begin{document}

\title[trees and gaps]{Trees and gaps from a construction scheme}

\author{Fulgencio Lopez}
\address{Department of Mathematics\\ University of Toronto\\ Bahen Center 40 St. George St.\\ Toronto, Ontario M5S 2E4, Canada.}
\email{fulgencio.lopez@mail.utoronto.ca}
\author{Stevo Todorcevic}
\address{Department of Mathematics, University of Toronto, Bahen Center 40 St. George St. Toronto, Ontario M5S 2E4, Canada.}
\address{Institut de Math\'ematiques de Jussieu, UMR 7586, 2 pl. Jussieu, case 7012, 75251 Paris Cedex 05, France.}
\email{stevo@math.toronto.edu, stevo.todorcevic@imj-prg.fr}

\subjclass[2010]{03E05, 03E35, 03E65.}
\keywords{Construction schemes, Suslin tree, destructible gaps, S-gaps, T-gaps.}

\begin{abstract}
We present simple constructions of trees and gaps using a general construction scheme that can be useful in constructing many other structures.  As a result,  we solve a natural problem
about Hausdorff gaps in the quotient algebra $\mathcal{P}(\omega)/{\rm Fin}$  found in the literature.  As it is well known Hausdorff gaps can sometimes be filled in  $\omega_1$-preserving forcing extensions. There are two natural conditions on Hausdorff gaps,
dubbed $S$ and $T$ in the literature,  that guarantee the existence of such forcing extensions.  In part, these conditions are motivated by analogies between fillable Hausdorff gaps and Souslin trees.  While the condition $S$ is  equivalent to the existence of $\omega_1$-preserving forcing extensions that fill the gap, we show here that its natural strengthening  $T$ is in fact  strictly stronger.
\end{abstract}

\maketitle

\section{Introduction}

Souslin trees are important set-theoretic objects that were first considered in connection with the Souslin Hypothesis that characterizes the unit interval as the unique ordered continuum satisfying the countable chain condition (see \cite{Kurepa}). They are also important tools in many other considerations in set theory. Similarly, Hausdorff's
$(\omega_1, \omega_1)$-gaps in the quotient algebra $\mathcal{P}(\omega)/{\rm Fin}$ are important set theoretic tools that naturally show up in a wide range considerations in set theory and related areas (see, for example,\cite{DalesWoodin}). It turns out that there are numerous analogies between $(\omega_1, \omega_1)$-gaps and Aronszajn trees, trees of height $\omega_1$ that have countable levels and branches (see, for example, \cite{LarsonTodorcevic}). Souslin trees are very specific kind of Aronszajn trees since they may admit uncountable branches in $\omega_1$-preserving forcing extensions of the set-theoretic universe. Analogously, as it is well known, some $(\omega_1, \omega_1)$-gaps may be filled in $\omega_1$-forcing extensions of the universe, so this sort of gaps are sometimes called {\it Souslin gaps}, or in short S-gaps. A Ramsey-theoretic analysis of S-gaps further strengthens this analogy and more importantly points out to a natural variation of this notion, a notion of a T-gap that we introduce below. We show that these seemingly similar notions are in fact different. For showing this we introduce a technique that should find many other applications.
In \cite{Todor} the second author introduced the concept of a construction scheme to build examples of compact spaces, convex sets and normed spaces which had previously required forcing constructions (see \cite{BGT} and \cite{LAT}). The existence of construction schemes is deduced in \cite{Todor} from Jensen's $\diamondsuit$-principle.  The following are our specific results where the notions of `capturing construction scheme', `S-gap',`fillable gap' and `T-gap' are defined in the following section.

\begin{theo}\label{suslin}
Assume there is a Construction Scheme that is 3-capturing. Then there is a Souslin tree.
\end{theo}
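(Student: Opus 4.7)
The plan is to build the Souslin tree $T$ with underlying set $\omega_1$, defining its tree order $<_T$ by recursion along the construction scheme $\mathcal{F}$. The $3$-capturing property will play the role that $\diamondsuit$ plays in the classical construction: rather than guessing uncountable antichains in advance, we arrange that capturing directly produces comparabilities that destroy every candidate antichain.

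Concretely, I would exploit the canonical amalgamation structure attached to each $F \in \mathcal{F}$, namely the way $F$ is built by gluing earlier members of $\mathcal{F}$ along their common initial part, to prescribe which elements of $F$ lie $<_T$-below which others. The coherence of the scheme should guarantee that the relation so defined is transitive and that the set of $<_T$-predecessors of every $\alpha < \omega_1$ is linearly ordered; combined with the size constraints on $\mathcal{F}$, this should produce an Aronszajn tree of height $\omega_1$ with countable levels. This part is essentially bookkeeping along the recursion, and should go through routinely once the designation rule inside each $F$ is fixed consistently.

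The substantive step, and the main obstacle, is to verify that $T$ has no uncountable antichain. Given an uncountable $A \subseteq \omega_1$, the $3$-capturing hypothesis yields some $F \in \mathcal{F}$ containing three elements $\alpha_0 < \alpha_1 < \alpha_2$ of $A$ that occupy corresponding positions in the amalgamation pattern of $F$. The definition of $<_T$ must be rigged so that this symmetric configuration forces two of $\alpha_0, \alpha_1, \alpha_2$ to be $<_T$-comparable, contradicting the antichain assumption. The delicate point is calibrating the designation rule inside each $F$ to be uniform enough that $3$-capturing automatically triggers a comparability, yet restrained enough that no unwanted comparabilities arise at earlier stages and spoil the tree structure. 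I expect the argument to hinge on choosing, within each captured $F$, a distinguished pair whose amalgamation via the third coordinate provides the required common $<_T$-predecessor, and on checking that the coherence inherited from the scheme propagates this choice consistently throughout the construction.
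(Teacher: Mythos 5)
Your overall strategy---define the tree by recursion along the canonical decompositions and let capturing kill antichains---is the right one, and it matches the paper's in spirit. But as written the proposal has a genuine gap: you never address why the resulting tree has no uncountable \emph{chain}, and you misallocate the capturing hypothesis. Having no uncountable antichain does give countable levels, but it does not give Aronszajn-ness; a tree order produced by an unconstrained ``designation rule'' could perfectly well carry an uncountable branch, and ruling this out is not bookkeeping---it is the second substantive use of capturing. In the paper's construction the tree consists of $\{0,1\}$-valued functions $h_\alpha\restriction\delta$ ordered by extension, built from pairs $f_\alpha^F,g_\alpha^F$ with $f_\alpha^F(\alpha)=0$, $g_\alpha^F(\alpha)=1$, amalgamated by alternating $f$ and $g$ across the copies $F_j$ according to the parity of the copy containing $\delta$. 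Comparability of two captured points needs only the first two coordinates ($\beta=\varphi_1(\alpha)$ forces $h_\alpha\subset h_\beta$, killing antichains); the \emph{third} captured point is what kills chains, since the alternation rule makes $h_{\alpha_1}(\alpha_0)=1$ while $h_{\alpha_2}(\alpha_0)=0$, so $h_{\alpha_1}\perp h_{\alpha_2}$. Your sketch spends the three captured points entirely on the antichain side and leaves no mechanism to split potential branches.

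The second problem is that the ``delicate point'' you identify---calibrating the designation rule so that capturing triggers comparability without creating unwanted comparabilities earlier---is exactly the content of the proof, and you leave it unspecified. To complete the argument you need to exhibit a concrete amalgamation rule (the paper's parity-alternating $f/g$ assignment is one) and verify the coherence conditions: that the local functions agree on overlaps, that isomorphic copies get isomorphic data, and that the two values at $\alpha$ itself ($0$ for $f_\alpha$, $1$ for $g_\alpha$) are both realized among later copies so that splitting actually occurs. Without a specific rule, neither the well-definedness of the tree nor either of the two Souslin properties can be checked.
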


\begin{theo}\label{t.gap}
Assume there is a 3-capturing Construction Scheme. Then there is a $(\omega_1, \omega_1)$-gap that is a T-gap and so, in particular there is  $(\omega_1, \omega_1)$-gap
that can be filled in a forcing extension over a partially ordered set satisfying the countable chain condition.
\end{theo}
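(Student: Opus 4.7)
The plan is to build the gap $\langle (a_\alpha, b_\alpha) : \alpha < \omega_1\rangle$ by recursion along $\omega_1$, using the $3$-capturing construction scheme as the recursion skeleton, in close analogy with the Suslin tree construction of Theorem \ref{suslin}. At each stage $\alpha$ I would reserve a fresh finite block $I_\alpha \subseteq \omega$, and let the scheme dictate the membership pattern of these integers in $a_\alpha$ versus $b_\alpha$: the rule should be that $a_\alpha$ almost-contains each $a_\beta$ attached to a scheme-ancestor of $\alpha$, $b_\alpha$ stays almost-disjoint from these, and the fresh integers in $I_\alpha$ are split between $a_\alpha$ and $b_\alpha$ in a way that encodes the current branching pattern of the scheme.

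After checking inductively that this produces a coherent pre-gap, i.e.\ $a_\alpha \subseteq^* a_\beta$ and $a_\alpha \cap b_\beta =^* \emptyset$ for $\alpha < \beta$, I would verify unfillability in the ground model. Given any candidate separator $c \subseteq \omega$, I would thin the indexing to an uncountable set on which the intersection pattern of $c$ with the reserved blocks $I_\alpha$ is constant, apply $3$-capturing to find three indices $\alpha_0 < \alpha_1 < \alpha_2$ captured by a single scheme-step, and read off from the amalgamation rule a direct contradiction to $c$ being a separator. This shows that the construction yields a Hausdorff gap.

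The main task is to verify the T-gap property itself. Since T is a Ramsey-theoretic strengthening of the S-condition, asking that every uncountable family of finite approximations to a would-be filler contain three members amalgamating in the pattern prescribed by the gap, the same $3$-capturing argument used to refute a single separator should handle an arbitrary such family: feed the family to the scheme, extract three captured approximations sitting inside one scheme-step, and use the construction rule to exhibit the required amalgamation pattern among them. The final clause of the theorem will then follow from the characterization alluded to in the introduction, namely that every S-gap, and in particular every T-gap, can be filled by a forcing with the countable chain condition.

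The main obstacle will be arranging the reservation-and-splitting rule at step $\alpha$ precisely enough that capturing three scheme-indices literally produces the Ramsey-theoretic witness that the T-condition demands, while remaining coarse enough that the pre-gap inductive invariants survive. The Suslin tree construction of Theorem \ref{suslin} walks this same tightrope in the tree setting, with nodes and tree-order in place of integers and the dichotomy between $a_\alpha$ and $b_\alpha$, and I expect the gap analogue to go through by essentially the same capturing pattern, with the S versus T distinction showing up only in how we quantify over candidate fillers at the verification step.
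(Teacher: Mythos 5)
Your plan follows essentially the same route as the paper: build finite approximations $(a^F_\alpha,b^F_\alpha:\alpha\in F)$ by recursion on the rank of $F\in\calF$, let the position of $\alpha$ inside the canonical decomposition dictate how the fresh integers are split between the $a$-side and the $b$-side, and then apply $3$-capturing to an arbitrary uncountable $\Gamma$ so that a single captured triple $\alpha_0<\alpha_1<\alpha_2$ yields both the gap witness $a_{\alpha_0}\cap b_{\alpha_1}\neq\emptyset$ (via Proposition~\ref{ramseygapcondition}) and the T-witness $a_{\alpha_0}\subseteq a_{\alpha_2}$, $b_{\alpha_0}\subseteq b_{\alpha_2}$; the final clause then follows since a T-gap is an S-gap. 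Two points need repair. First, you cannot literally ``reserve a fresh finite block $I_\alpha\subseteq\omega$ at each stage $\alpha$'' for $\alpha<\omega_1$: there is no uncountable family of pairwise disjoint nonempty finite subsets of $\omega$. In the paper the fresh integers are indexed by the \emph{rank} $k$, not the ordinal: only the two points $N_{k-1},N_{k-1}+1$ are added at rank $k$, the isomorphism condition forces every ordinal in a rank-$k$ member of $\calF$ to receive exactly these two points, and the only thing that varies is which of the two lands in $a_\alpha$ and which in $b_\alpha$ --- determined by the parity of the index $i$ of the piece $F_i$ containing $\alpha$. That parity rule is the ``tightrope'' you correctly anticipate but leave unspecified, and it is exactly what makes the captured triple work: $\alpha_0\in F_0$ and $\alpha_1\in F_1$ have opposite parities, so the new point of $a_{\alpha_0}$ coincides with the new point of $b_{\alpha_1}$, while $\alpha_0\in F_0$ and $\alpha_2\in F_2$ have the same parity, so the inclusions are literal. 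Second, your description of the T-condition in terms of ``uncountable families of finite approximations to a would-be filler'' is really the forcing reformulation of the S-condition; the T-condition of Definition~\ref{t.gap.def} quantifies over uncountable sets of single indices and demands genuine inclusions $a_\alpha\subseteq a_\beta$ and $b_\alpha\subseteq b_\beta$, which is what the even/even pair of the captured triple delivers.
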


Every T-gap is a fillable gap but the converse need not be true. More precisely, we have the following result
\begin{theo}\label{model.gaps}
There is a model of set theory in which there is a fillable $(\omega_1, \omega_1)$-gap but with no T-gaps.
\end{theo}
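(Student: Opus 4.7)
The plan is a finite-support ccc iteration of length $\omega_2$ over a ground model $V$ of CH (or, more concretely, over a construction-scheme model as in Theorems~\ref{suslin} and~\ref{t.gap}), designed to kill every T-gap while preserving at least one fillable gap $(A^*, B^*)$. Since every T-gap is fillable, this separation suffices to prove the theorem.

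I would proceed in three phases. First, in $V$, using CH, I would construct by diagonal recursion a fillable gap $(A^*, B^*)$ chosen generically against a fixed enumeration of the potential T-gap witnesses that the iteration will eventually handle; informally, $(A^*, B^*)$ is built with enough combinatorial freedom in its two sides to remain undisturbed by all iterands below. Second, I would define an iteration $\langle P_\alpha, \dot Q_\alpha : \alpha < \omega_2 \rangle$ with finite support, in which at each stage $\dot Q_\alpha$ is a ccc forcing that destroys a T-gap $(\dot A_\alpha, \dot B_\alpha)$ chosen by bookkeeping in $V^{P_\alpha}$. The natural choice for $\dot Q_\alpha$ is the canonical gap-filling poset $\mathrm{Pre}(\dot A_\alpha, \dot B_\alpha)$, which is ccc precisely because every T-gap is an S-gap. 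A standard reflection argument for finite-support ccc iterations of length $\omega_2$ over CH then shows that every T-gap in $V^{P_{\omega_2}}$ already appeared at some intermediate stage $\alpha < \omega_2$ and is killed at some later stage $\beta \geq \alpha$; hence the final model contains no T-gap.

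The main obstacle, and the heart of the proof, is the preservation of $(A^*, B^*)$ as a fillable gap in $V^{P_{\omega_2}}$. Concretely, one must show that (a) no separator of $(A^*, B^*)$ is added by the iteration, so that $(A^*, B^*)$ remains a pre-gap, and (b) no uncountable antichain is added to $\mathrm{Pre}(A^*, B^*)$, so that $(A^*, B^*)$ remains an S-gap. The natural route is a $\Delta$-system plus amalgamation analysis tailored to the diagonal choice of $(A^*, B^*)$: for each finite condition of an iterand that could threaten $(A^*, B^*)$, the generic freedom built into its sides should supply infinitely many cofinal opportunities in $A^*$ and $B^*$ to realign and defeat the threat. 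This is analogous to, but strictly more delicate than, the preservation of a specific Souslin tree under a ccc iteration aimed at killing other Souslin trees, and is precisely where the construction-scheme technology developed for Theorems~\ref{suslin} and~\ref{t.gap} is likely to be indispensable: it provides the fine-grained control over $(A^*, B^*)$ needed to execute the preservation against the entire iteration, rather than against a single forcing.
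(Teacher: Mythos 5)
There is a genuine gap, and it sits exactly where you locate it: the preservation of a fillable gap through the iteration. You choose as iterands the canonical gap-splitting posets $\mathrm{Pre}(\dot A_\alpha,\dot B_\alpha)$ of the T-gaps handed to you by the bookkeeping, and then you must show that some fillable gap $(A^*,B^*)$ survives, i.e.\ that $\mathrm{Pre}(A^*,B^*)$ stays ccc in $V^{P_{\omega_2}}$. This is precisely the kind of statement that can fail for iterations of gap-splitting posets: forcing $\mathrm{MA}_{\omega_1}$ by such an iteration makes \emph{every} gap indestructible, and you offer no argument --- only an analogy with Souslin-tree preservation and an appeal to ``generic freedom'' in the construction of $(A^*,B^*)$ --- for why restricting the iterands to T-gaps avoids this. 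Since this preservation is, as you say yourself, the heart of the proof, the proposal does not constitute a proof.

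The paper sidesteps the difficulty by killing the T-property with a different, much gentler poset that does \emph{not} split the gap. Given a gap $(a_\alpha,b_\alpha)_{\alpha<\omega_1}$, normalized so that $a_\beta\not\subset a_\alpha$ for $\alpha<\beta$, one forces with finite subsets of $\gap=(a_\alpha)_{\alpha<\omega_1}$ that are antichains under $\subseteq$. This poset $\PP_\gap$ is ccc for \emph{every} gap (no S-gap hypothesis needed; the proof is an elementary-submodel argument), and it generically adds an uncountable $\Gamma$ on which no pair satisfies $a_\alpha\subseteq a_\beta$, so the gap is no longer a T-gap --- while remaining a gap. The key lemma, proved by an elementary-submodel argument that invokes the S-gap Ramsey property twice, is that each $\PP_\gap$ preserves all S-gaps; preservation then passes through limits of the finite-support iteration by a $\Delta$-system/counting argument. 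Consequently no diagonal construction of a special $(A^*,B^*)$ is needed: any S-gap in the ground model (e.g.\ one obtained from $\diamondsuit$, as in \cite{ADow}, or from Theorem~\ref{t.gap}) survives to $V[G_{\omega_2}]$, where MA$_{\omega_1}$ restricted to posets of the form $\PP_\gap$ guarantees there are no T-gaps. If you want to salvage your outline, replace the iterands $\mathrm{Pre}(\dot A_\alpha,\dot B_\alpha)$ by the posets $\PP_{\dot\gap}$ and prove the S-gap preservation lemma; the rest of your bookkeeping and reflection scaffolding then goes through.
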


\section{Preliminaries}
For bounded subsets $A,B\subset\omega_1$ we say that $A<B$ if for every $a\in A$ and $b\in B$ we have $a<b$.
We will work with a special kind of $\Delta$-systems.
\begin{defi}
For $\gamma\leq \omega_1,$ we say that a sequence $(s_\alpha)_{\alpha<\gamma}$ of finite subsets of $\omega_1$ is an increasing $\Delta$-system with root $s$ if for every $\alpha<\beta<\gamma$ we have
$s_\alpha\cap s_\beta=s$ and
$s<(s_\alpha\setminus s)<(s_\beta\setminus s)$.
\end{defi}

Recall that every uncountable family of finite subsets of $\omega_1$ contains an uncountable increasing $\Delta$-system as above.


\subsection{Construction schemes}
In this section, we introduce the notion of a construction scheme.
The key feature of this scheme is that it provides a family $\calF$ of finite subsets of $\omega_1$ which
allow us to perform recursive constructions by amalgamating {\it many isomorphic} structures
of lower rank. These amalgamations will determine the behaviour of uncountable substructures of the limit structure  via an appropriate property of {\it capturing} of the construction scheme.
For a more detailed analysis, see \cite{Todor}.

\begin{defi}
Let $(m_k)_{k<\omega}, (n_k)_{1\leq k<\omega}$ and $(r_k)_{1\leq k<\omega}$ be sequences of natural numbers such that
$m_0=1$, $m_{k-1}>r_k$ for all $k>0$, $n_k>k$ and for every $r<\omega$ there are infinitely many $k$'s with $r_k=r$. If for every $k>0$ we have
$$ m_k=n_k(m_{k-1}-r_k)+r_k$$
we say that $(m_k,n_k,r_k)_{k<\omega}$ forms a {\it type}.
\end{defi}

\begin{defi}\label{cons.sch}
Let $\calF$ be a family of finite subsets of $\omega_1$ such that
\begin{enumerate}
\item For every $A\subset\omega_1$ finite, there is $F\in\calF$ such that $A\subset F$.
\end{enumerate}
We say that $\calF$ is a \emph{construction scheme of type} $(m_k,n_k,r_k)_{k<\omega}$ if there are two mappings
$$ \rho:\calF\longrightarrow\omega\qquad R:\calF\longrightarrow[\omega_1]^{<\omega}$$
such that for every $F\in\calF$, with $\rho^F=k>0$ the following holds
\begin{enumerate}[resume]
\item $|F|=m_k$ and $|R(F)|=r_k$.
\item there are unique $F_i\in\calF\ (i<n_k)$ such that, $\rho^{F_i}=k-1$ and 
$$   F=\bigcup_{i<n_k} F_i $$
 
Furthermore $(F_i)_{i<n_k}$ forms an increasing $\Delta$-system with root $R(F),$ i.e.,
$$									R(F)< F_0\setminus R(F) <\ldots < F_{n_k-1}\setminus R(F) $$
\end{enumerate}
\end{defi}

We call $\rho^F$ the \emph{rank} of $F$ and the sequence $(F_i)_{i<n_k}$ of (3) the \emph{canonical decomposition} of $F$.

It is proved in \cite{Todor} that for any type $(m_k,n_k,r_k)_{k<\omega}$ there is a construction scheme  with that type.
To avoid confusion we will use $m_k, n_k$ and $r_k$ as above and we will omit reference to the type of a construction scheme.

For two $F,E\in\calF$ of the same rank there is a unique order-preserving bijection, we denote this map by $\varphi_{F,E}$.
In the particular case of $\varphi_{F_0,F_i}:F_0\rightarrow F_i$ we will simply write $\varphi_i$ when there is no confusion.
If $f$ is a function on $F_0$ then we can define the function $\varphi_i(f)$ in $F_i$ by $\gamma\mapsto f(\varphi_i^{-1}(\gamma))$.

We introduce now the concept of capturing
\begin{defi}
Let $\calF$ be a construction scheme. We say that $\calF$ is $n$-\emph{capturing} if for every uncountable $\Delta$-system
$(s_\xi)_{\xi<\omega_1}$ of finite subsets of $\omega_1$ with root $s$ there is a sub-$\Delta$-system $(s_{\xi_i})_{i<n}$ and $F\in\calF$ such that
\begin{gather*}
s\subset R(F)\\
s_0\setminus s\subset F_0\setminus R(F) \\
\varphi_i(s_{\xi_0}\setminus s)=s_{\xi_i}\setminus s\subset F_i\setminus R(F) \quad (i<n),
\end{gather*}
\end{defi}
where $   F=\bigcup_{i<n_k} F_i $ is the canonical decomposition of $F$ with $k=\rho^F>0.$

In \cite{Todor} it is shown that the existence of a Construction Scheme which is $k$-capturing for arbitrarily long $k<\omega$,
follows from $\diamondsuit$ and can be used to construct a large spectrum of different  examples of  mathematical structures  motivated by some previous forcing constructions (see, \cite{BGT} and  \cite{LAT}).
We will see below that only $3$-capturing is enough to construct some other interesting combinatorial objects.


\subsection{Gaps in $[\omega]^\omega$}
We recall the definition of gap in $[\omega]^\omega$ as well as some well known results.

For $a$ and $a'$, infinite subsets of $\omega$ we say $a\subseteq^* a'$ if $a\setminus a'$ is finite.

\begin{defi}
We say $(a_\alpha,b_\alpha)_{\alpha<\omega_1}$, with $a_\alpha,b_\alpha\subset\omega$ infinite,
is a \emph{pre-gap} if for every $\alpha<\beta<\omega_1$
\begin{enumerate}
\item $a_\alpha\cap b_\alpha=\emptyset$.
\item $a_\alpha\subseteq^* a_\beta$ and $b_\alpha\subseteq^* b_\beta$.
\item $a_\delta\cap b\gamma$ is finite for every $\delta,\gamma<\omega_1$.
\end{enumerate}

We say that $(a_\alpha,b_\alpha)_{\alpha<\omega_1}$ is a \emph{gap} if it is a pre-gap and
\begin{enumerate}[resume]
\item there is no infinite $c\subset\omega$ such that
\begin{enumerate}
                    \item $a_\alpha\subseteq^* c$ for every $\alpha<\omega_1$.
										\item $b_\alpha\cap c$ is finite for every $\alpha<\omega_1$.
\end{enumerate}
\end{enumerate}
\end{defi}

The existence of gaps is due to Hausdorff \cite{Haus}.
Recall the following Ramsey property of gaps
\begin{prop}\label{ramseygapcondition}
             A pre-gap $(a_\alpha,b_\alpha)_{\alpha<\omega_1}$ form an $(\omega_1,\omega_1)$-gap if and only if for every uncountable
						 $\Gamma\subset\omega_1$ there are $\alpha<\beta$ in $\Gamma$ such that $a_\alpha\cap b_\beta\neq\emptyset$.
\end{prop}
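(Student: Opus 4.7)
The plan is to prove both implications by contraposition, using pigeonhole inside $[\omega]^{<\omega}$ for one direction and a bookkeeping argument on a ``tail'' of $\Gamma$ for the other.

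For the easy direction $(\Leftarrow)$, I will assume a separating set $c$ exists and extract an uncountable $\Gamma$ on which the Ramsey property fails. The finite sets $u_\alpha := a_\alpha\setminus c$ and $v_\alpha := b_\alpha\cap c$ live in the countable space $[\omega]^{<\omega}\times[\omega]^{<\omega}$, so a first pigeonhole yields an uncountable $\Gamma_0$ with constants $u_\alpha\equiv u$ and $v_\alpha\equiv v$. Setting $N := 1+\max(u\cup v)$, a second pigeonhole on $\mathcal{P}(N)\times\mathcal{P}(N)$ refines $\Gamma_0$ to an uncountable $\Gamma$ on which $a_\alpha\cap N\equiv A$ and $b_\alpha\cap N\equiv B$, with $A\cap B=\emptyset$ inherited from $a_\alpha\cap b_\alpha=\emptyset$. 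For any $\alpha<\beta$ in $\Gamma$, decomposing $a_\alpha\cap b_\beta = (a_\alpha\cap b_\beta\cap c)\cup((a_\alpha\setminus c)\cap b_\beta)$ places each summand inside $A\cap B=\emptyset$, so $a_\alpha\cap b_\beta=\emptyset$.

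For the harder direction $(\Rightarrow)$, I assume the Ramsey condition fails on some uncountable $\Gamma$, i.e.\ $a_\alpha\cap b_\beta=\emptyset$ whenever $\alpha<\beta$ in $\Gamma$, and will manufacture a set that fills the pre-gap. Introduce
\[
A_\infty = \{\,n\in\omega : \{\gamma\in\Gamma : n\in a_\gamma\}\text{ is unbounded in }\omega_1\,\},
\]
and $B_\infty$ analogously for $b$. The hypothesis forces $A_\infty\cap B_\infty=\emptyset$: any $n$ in both would yield $\gamma<\gamma'$ in $\Gamma$ with $n\in a_\gamma\cap b_{\gamma'}$, a contradiction. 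For each $n\in\omega\setminus A_\infty$ the set $\{\gamma\in\Gamma:n\in a_\gamma\}$ is bounded in $\omega_1$ by some $\delta_n$; since only countably many $n$ contribute, the ordinal $\delta_A:=\sup\{\delta_n : n\in\omega\setminus A_\infty\}$ is still below $\omega_1$. Define $\delta_B$ symmetrically. Then $\Gamma'' := \{\gamma\in\Gamma : \gamma>\max(\delta_A,\delta_B)\}$ is an uncountable tail of $\Gamma$, and by construction $a_\gamma\subseteq A_\infty$ and $b_\gamma\subseteq B_\infty$ for every $\gamma\in\Gamma''$.

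The candidate filler is $c := A_\infty$. Given any $\alpha<\omega_1$, pick $\gamma\in\Gamma''$ with $\gamma>\alpha$: then $a_\alpha\subseteq^* a_\gamma\subseteq A_\infty = c$, while $b_\alpha\cap c\subseteq (b_\alpha\setminus b_\gamma)\cup(b_\gamma\cap A_\infty)$ is finite---the first summand by $b_\alpha\subseteq^* b_\gamma$, the second empty by $b_\gamma\subseteq B_\infty$ and $A_\infty\cap B_\infty=\emptyset$. Since $c\supseteq a_\gamma$ is infinite, $c$ genuinely fills the pre-gap, contradicting the assumption that it is a gap. The main obstacle is precisely this $(\Rightarrow)$ direction: the naive candidate $c:=\bigcup_{\gamma\in\Gamma}a_\gamma$ runs aground because the finite ``defects'' $b_\beta\setminus b_\gamma$ can accumulate as $\gamma$ ranges; the essential trick is to replace ``in some $a_\gamma$'' by ``in cofinally many $a_\gamma$'' (which gives $A_\infty$) and then push $\Gamma$ past the countable threshold $\max(\delta_A,\delta_B)$, after which the separation properties drop out automatically.
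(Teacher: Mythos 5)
Your proof is correct in both directions. The $(\Leftarrow)$ direction (no separating set exists, via contraposition) is essentially the paper's argument: both stabilize $a_\alpha\setminus c$, $b_\alpha\cap c$, and the traces $a_\alpha\cap N$, $b_\alpha\cap N$ below a fixed threshold, and then read off $a_\alpha\cap b_\beta=\emptyset$ from the disjointness of the constant traces. The $(\Rightarrow)$ direction is where you genuinely diverge: the paper takes the naive filler $c=\bigcup_{\alpha\in\Gamma}a_\alpha$, whereas you take $c=A_\infty$, the set of integers lying in cofinally many $a_\gamma$. Your variant works and is arguably cleaner, since $A_\infty\cap B_\infty=\emptyset$ is immediate from the failure of the Ramsey condition and the separation properties then follow from a single tail computation. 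However, your closing remark that the naive union ``runs aground'' because the defects $b_\beta\setminus b_\gamma$ accumulate is mistaken: since $c\cap b_\gamma\subseteq\omega$ is countable, each of its elements already appears in some $a_\alpha$ with $\alpha$ in a fixed countable initial segment $\Gamma\cap\delta$ of $\Gamma$ (take $\delta\in\Gamma$ above $\gamma$ and above the countable supremum of the first appearances), and then $c\cap b_\gamma\subseteq\bigl(\bigcup_{\alpha\in\Gamma\cap\delta}a_\alpha\cap b_\delta\bigr)\cup(b_\gamma\setminus b_\delta)=b_\gamma\setminus b_\delta$ is finite. This is exactly the paper's argument, so the two fillers are both legitimate; what your $A_\infty$ buys is that this ``countably many elements, each appearing at a countable stage'' observation is replaced by the bookkeeping ordinals $\delta_A,\delta_B$, at the cost of having to verify separately that $c$ is infinite (which you do, via $c\supseteq a_\gamma$ for $\gamma\in\Gamma''$).
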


\begin{defi}
             We say a gap $(a_\alpha,b_\alpha)_{\alpha<\omega_1}$ is a S-gap if for every uncountable
						$\Gamma\subset\omega_1$ there are $\alpha<\beta$ in $\Gamma$ such that $(a_\alpha\cap b_\beta)\cup(a_\beta\cap b_\alpha)=\emptyset$.
\end{defi}

The existence of a S-gap is independent of ZFC. In \cite{ADow} a S-gap is constructed using $\diamondsuit$
and the next proposition implies that under MA$_{\aleph_1}$ all gaps are indestructible (see e.g. \cite{FarTo}).
\begin{prop}\label{suslingapcondition}
             The following are equivalent:
						\begin{enumerate}
						\item There is an $\omega_1$-preserving forcing notion that splits $(a_\alpha,b_\alpha)_{\alpha<\omega_1}$.
						\item The forcing notion defined by $p\in\PP=[\omega_1]^{<\omega}$ iff $a_\alpha\cap b_\beta=\emptyset$
						      for all $\alpha\neq\beta$ in $p$ ordered by extension has the ccc.
						\item For every uncountable $\Gamma\subset\omega_1$ there are $\alpha<\beta$ in $\Gamma$ such that
						$(a_\alpha\cap b_\beta)\cup(a_\beta\cap b_\alpha)=\emptyset$.
						\end{enumerate}
\end{prop}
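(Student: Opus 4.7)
I will prove the equivalence by cycling $(2) \Rightarrow (1) \Rightarrow (3) \Rightarrow (2)$.

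For $(2) \Rightarrow (1)$: since $\PP$ is ccc it preserves $\omega_1$. Standard density arguments in $V^\PP$ give that $U := \bigcup \dot G$ is uncountable and that $c := \bigcup_{\alpha \in U} a_\alpha$ separates the gap: $a_\alpha \subseteq^* c$ via any $\alpha' > \alpha$ in $U$, while $c \cap b_\beta$ is finite via the compatibility condition defining $\PP$ (for $\beta \in U$) or a density argument producing a blocking condition in $\dot G$ (for $\beta \notin U$).

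For $(1) \Rightarrow (3)$: let $\mathbb Q$ be an $\omega_1$-preserving forcing with separator $c$ in $V^{\mathbb Q}$. Given an uncountable $\Gamma \in V$, in $V^{\mathbb Q}$ pick for each $\alpha \in \Gamma$ a threshold $n_\alpha \in \omega$ with $a_\alpha \setminus n_\alpha \subseteq c$ and $b_\alpha \cap c \subseteq [0, n_\alpha)$. As $\Gamma$ is still uncountable, pigeonhole yields an uncountable $\Xi \subseteq \Gamma$ on which $n_\alpha$, $a_\alpha \cap [0, n_\alpha)$, and $b_\alpha \cap [0, n_\alpha)$ are constants $n, A_0, B_0$; note $A_0 \cap B_0 \subseteq a_\alpha \cap b_\alpha = \emptyset$. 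For $\alpha < \beta$ in $\Xi$, the part of $a_\alpha \cap b_\beta$ above $n$ lies in $c \cap b_\beta \subseteq [0, n)$, forcing $a_\alpha \cap b_\beta \subseteq [0, n)$ and hence $a_\alpha \cap b_\beta = A_0 \cap B_0 = \emptyset$; symmetrically $a_\beta \cap b_\alpha = \emptyset$. The existence of such a pair in $\Gamma$ is $\Sigma_1$-absolute and hence holds in $V$, giving (3).

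For $(3) \Rightarrow (2)$ --- the hard direction --- suppose for contradiction that $\{p_\xi\}_{\xi < \omega_1}$ is an uncountable antichain in $\PP$. After reducing to an increasing $\Delta$-system with root $r$ and $|p_\xi \setminus r| = k$ constant, write $p_\xi \setminus r = \{\alpha_\xi^0 < \ldots < \alpha_\xi^{k-1}\}$. For each pair $\xi < \eta$ pick a witness $(n_{\xi\eta}, i_{\xi\eta}, j_{\xi\eta}, d_{\xi\eta}) \in \omega \times k \times k \times 2$ of incompatibility (namely $n \in a_{\alpha_\xi^i} \cap b_{\alpha_\eta^j}$ if $d = +$, and symmetrically for $d=-$). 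Iterating pigeonhole on this countable-range coloring, extract an uncountable $\Xi$ together with a fixed popular quadruple $(n^*, i^*, j^*, d^*)$. Setting $V^* = \{\xi : n^* \in a_{\alpha_\xi^{i^*}}\}$ and $W^* = \{\xi : n^* \in b_{\alpha_\xi^{j^*}}\}$, both sets are uncountable and $V^* \cap W^* = \emptyset$: directly from $a_\alpha \cap b_\alpha = \emptyset$ when $i^* = j^*$, and from the condition $p_\xi \in \PP$ applied to the distinct ordinals $\alpha_\xi^{i^*}, \alpha_\xi^{j^*}$ when $i^* \neq j^*$. Applying (3) to the disjoint uncountable union $\Gamma = \{\alpha_\xi^{i^*} : \xi \in V^*\} \sqcup \{\alpha_\eta^{j^*} : \eta \in W^*\}$, a compatible pair whose two members lie in different parts immediately gives $n^* \in a_\gamma \cap b_\delta$ (or the symmetric intersection), a contradiction.

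The main obstacle is to rule out the possibility that (3) returns a \emph{same-side} compatible pair inside $\Gamma$, which by itself does not violate the uniform witness data. To handle this I would iterate the pigeonhole one further level: the antichain restricted to $V^*$ (respectively $W^*$) produces a second popular witness $(n^{**}, i^{**}, j^{**}, d^{**})$ certifying intra-$V^*$ incompatibilities, and the resulting two-layer witness structure --- combined with (3) applied to a suitably enlarged disjoint union built from both layers --- forces the compatible pair returned by (3) to cross between the two parts at at least one of the two scales, completing the contradiction.
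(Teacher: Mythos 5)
Your directions $(2)\Rightarrow(1)$ and $(1)\Rightarrow(3)$ are fine; the latter is essentially the argument for Proposition~\ref{ramseygapcondition} transplanted into $V^{\mathbb Q}$, together with the (correct) observation that the witnessing pair consists of ground-model ordinals and an absolute property, whereas the paper instead proves $(1)\Rightarrow(2)$ by noting that an uncountable antichain of $\PP$ in $V$ would survive into $V^{\mathbb Q}$. The genuine gap is in $(3)\Rightarrow(2)$. First, ``iterating pigeonhole on this countable-range coloring'' of \emph{pairs} $\{\xi,\eta\}$ is not available: by Sierpi\'nski, $\omega_1\not\rightarrow(\omega_1)^2_\omega$, so you cannot extract an uncountable set on which the witness quadruple is constant. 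One can salvage uncountable disjoint $V^*$ and $W^*$ by uniformizing the witness first in $\xi$ for each fixed $\eta$ and then over $\eta$, but this does not rescue the argument, because of the very obstacle you name yourself: condition (3) only produces \emph{some} orthogonal pair in $\Gamma$, and nothing prevents that pair from lying on one side of the partition $V^*/W^*$, where you have no information at all. Your proposed fix---a second layer of witnesses inside $V^*$---merely pushes the problem down: pairs inside the next-level set remain unconstrained, and the regress never closes. Note also that your argument for this direction never uses that $(a_\alpha,b_\alpha)$ is a \emph{pre-gap}; since $(3)\Rightarrow(2)$ fails for arbitrary families of disjoint pairs, any correct proof must invoke the $\subseteq^*$-structure somewhere.

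That structure is exactly the missing idea, and it is how the paper argues. After thinning $(p_\xi)$ to a $\Delta$-system with $p_\xi=\{\delta^\xi_1<\dots<\delta^\xi_k\}$, use the $\subseteq^*$-monotonicity of the pre-gap to find a single $n<\omega$ and an uncountable subfamily such that $a_{\delta^\xi_i}\setminus n\subseteq a_{\delta^\xi_k}$ and $b_{\delta^\xi_i}\setminus n\subseteq b_{\delta^\xi_k}$ for all $i$, and such that the traces $a_{\delta^\xi_i}\cap n=s_i$ and $b_{\delta^\xi_i}\cap n=t_i$ do not depend on $\xi$ (so $s_i\cap t_j=\emptyset$ because each $p_\xi\in\PP$). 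Then a \emph{single} application of (3), to the set of top elements $\{\delta^\xi_k\}$, yields $\xi<\eta$ with $(a_{\delta^\xi_k}\cap b_{\delta^\eta_k})\cup(a_{\delta^\eta_k}\cap b_{\delta^\xi_k})=\emptyset$, and every cross intersection $a_{\delta^\xi_i}\cap b_{\delta^\eta_j}$ is empty below $n$ by the trace condition and empty above $n$ because it is absorbed into $a_{\delta^\xi_k}\cap b_{\delta^\eta_k}$. Hence $p_\xi\cup p_\eta\in\PP$, so $\PP$ has no uncountable antichain.
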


In the literature, $(\omega_1, \omega_1)$-gaps with these properties are called `destructible gaps', `fillable gaps', `Souslin gaps' or `S-gaps.' This definition leads us to the following natural strengthening.
\begin{defi}\label{t.gap.def}
We say a gap $(a_\alpha,b_\alpha)_{\alpha<\omega_1}$ is a \emph{T-gap} if for every uncountable $\Gamma\subset\omega_1$ there are
$\alpha<\beta$ such that $a_\alpha\subseteq a_\beta$ and $b_\alpha\subseteq b_\beta$.
\end{defi}

We will show that it is consistent that there are S-gaps but no T-gaps.

We give the proofs of the propositions for the convenience of the reader.
\begin{proof}[Proof of Proposition~\ref{ramseygapcondition}]
Suppose $(a_\alpha,b_\alpha)_{\alpha<\omega_1}$ is not a gap and let $c\subset\omega$ witness this.
There is $n<\omega$ and uncountable $\Gamma\subset\omega_1$ such that $a_\alpha\setminus c\subset n$ and $b_\alpha\cap c\subset n$
for all $\alpha\in\Gamma$. We can also assume that there are $s,t\subset n$ such that for every $\alpha\in\Gamma$
$a_\alpha\cap n=s$ and $b_\alpha\cap n=t$. The condition $a_\alpha\cap b_\alpha=\emptyset$ implies that $s\cap t=\emptyset$.

For every $\alpha<\beta$ in $\Gamma$ we have
$$ a_\alpha\cap b_\beta =(a_\alpha\cap n)\cap (b_\beta\cap n)=s\cap t=\emptyset$$

Suppose now that there is $\Gamma\subset\omega_1$ uncountable such that $a_\alpha\cap b_\beta=\emptyset$ for every $\alpha<\beta$ in $\Gamma$. Define
$$ c=\bigcup_{\alpha\in\Gamma}a_\alpha $$
is clear that $a_\alpha\subset^* c$ for every $\alpha<\omega_1$.
We just have to check that $c\cap b_\gamma$ is finite for all $\gamma<\omega_1$.
Let $\gamma<\omega_1$.
Since $a_\alpha\cap b_\gamma$ is finite, if $c\cap b_\gamma$ is infinite there must be some $\delta\in\Gamma$ limit in $\Gamma$,
$\gamma<\delta$ such that
$$ \bigcup_{\alpha\in\Gamma\cap\delta}a_\alpha\cap b_\gamma\quad\mbox{is infinite} $$
but $b_\gamma\setminus b_\delta$ is finite and $\bigcup_{\alpha\in\Gamma\cap\delta}a_\alpha\cap b_\delta=\emptyset$, contradiction.

\end{proof}

\begin{proof}[Proof of Proposition~\ref{suslingapcondition}]
First we see $(3)\Rightarrow(2)\Rightarrow(1)$.
Let $\mathbb P$ be as in $(2)$.
Notice that $\mathbb P$ forces $(a_\alpha,b_\alpha)_{\alpha<\omega_1}$ to split by forcing $\Gamma\subset\omega_1$ without the property of Proposition~\ref{ramseygapcondition}.
We see that $\PP$ is ccc hence $\omega_1$-preserving.

Let $(p_\alpha)_{\alpha<\omega_1}$ in $\PP$.
There is uncountable $\Gamma\subset\omega_1$ such that:
\begin{enumerate}\renewcommand{\theenumi}{$(\roman{enumi})$}\renewcommand{\labelenumi}{\theenumi}
\item $(p_\alpha)_{\alpha\in\Gamma}$ forms a $\Delta$-system with $|p_\alpha|=k$.
\item If $p_\alpha=\{\delta^\alpha_{1}<\ldots<\delta^\alpha_k\}$ there is $n<\omega$ such that $a_{\delta^\alpha_i}\setminus n\subset a_{\delta^\alpha_k}$ and the same for $b_{\delta^\alpha_i}$.
\item There are $s_i,t_i\subset n$ for $i=1,\ldots,k$ such that $a_{\delta^\alpha_i}\cap n=s_i$ and $b_{\delta^\alpha_i}\cap n=t_i$.
\end{enumerate}
Note that $s_i\cap t_j=\emptyset$. Consider $\{\delta^\alpha_k\}_{\alpha\in\Gamma}$ by hypothesis
\begin{equation*}
\mbox{there are $\alpha<\beta$ in $\Gamma$
such that }(a_{\delta^\alpha_k}\cap b_{\delta^\beta_k})\cup(a_{\delta^\beta_k}\cap b_{\delta^\alpha_k})=\emptyset.
\end{equation*}

\noindent By $(iii)$ we have $(a_{\delta^\alpha_i}\cap b_{\delta^\beta_j})\cup(a_{\delta^\beta_j}\cap b_{\delta^\alpha_i})\cap n=\emptyset$, by $(ii)$ we have
$$(a_{\delta^\alpha_i}\cap b_{\delta^\beta_j})\cup(a_{\delta^\beta_j}\cap b_{\delta^\alpha_i})\setminus n\subset (a_{\delta^\alpha_k}\cap b_{\delta^\beta_k})\cup(a_{\delta^\beta_k}\cap b_{\delta^\alpha_k})=\emptyset$$
and $p_\alpha\cup p_\beta\in\PP$.

$(2)\Rightarrow(3)$ Let $\Gamma$ be an uncountable subset of $\omega_1$. Take $(p_\alpha=\{\alpha\})_{\alpha\in\Gamma}$
since $\PP$ has the ccc there is $\alpha<\beta$ in $\Gamma$ such that $p_\alpha\not\perp p_\beta$ but this implies
$(a_\alpha\cap b_\beta)\cup(a_\beta\cap b_\alpha)=\emptyset$ as we wanted.

$(1)\Rightarrow(2)$ Let $\mathbb Q$ be a forcing notion $\omega_1$-preserving that splits $(a_\alpha,b_\alpha)_{\alpha<\omega_1}$.
By the proof of Proposition~\ref{ramseygapcondition} for every $\dot{\Gamma}_0\subset\omega_1$ uncountable we can find $\dot{\Gamma}$ such that
\begin{align*}
\mathbb Q &\Vdash \dot{\Gamma}\subset\dot{\Gamma}_0 \mbox{ uncountable.} \\
\mathbb Q &\Vdash \mbox{``for every $\alpha<\beta$ in $\dot{\Gamma},\ (a_\alpha\cap b_\beta)\cup(a_\beta\cap b_\alpha)=\emptyset$.''}
\end{align*}
Applying $(2)\Leftrightarrow(3)$, which we already proved, $\mathbb Q\Vdash \mbox{``$\PP$  has the ccc''}$.
If $\PP$ has an uncountable antichain on the ground model
it has an uncountable antichain on $V^{\mathbb Q}$ because $\mathbb Q$ is $\omega_1$-preserving. Thus $\PP$ is ccc and we finish the proof.
\end{proof}

\section{Suslin tree}

In this section we will prove Theorem \ref{suslin}. Let $\calF$ be a construction scheme 3-capturing.
We will construct by recursion \textit{finite approximations} to an uncountable tree using the structure of $\calF$,
then the capturing property of $\calF$ will make this tree Suslin.

More precisely; for every $F\in\calF$ and every $\alpha\in F$ we construct
functions $f_\alpha^F,g_\alpha^F:F\rightarrow \{0,1\}$ such that
\begin{enumerate}
\item $f_\alpha^F\restriction\alpha=g_\alpha^F\restriction\alpha$
\item $f_\alpha^F(\alpha)=0$, $g_\alpha^F(\alpha)=1$.
\end{enumerate}
We want the functions to be isomorphic and coherent;
\begin{enumerate}[resume]
\item If $E,F\in\calF$ with $\rho^E=\rho^F$, $\alpha\in E$ and $\bar{\alpha}=\varphi_{E,F}(\alpha)$ then,
      $f_{\bar{\alpha}}^F=\varphi_{E,F}(f_\alpha^E)$.
\item If $E\subset F$, then for every $\alpha\in E$ we have
$$f_\alpha^E\subseteq f_\alpha^F\quad\mbox{and} \quad g_\alpha^E\subset g_\alpha^F$$
\end{enumerate}

We can now define $(h_\alpha: \alpha<\omega)$ such that

$$ h_\alpha\restriction F=f_\alpha^F\restriction(\alpha\cap F)=g_\alpha^F\restriction (\alpha\cap F)$$
for every $F\in\calF$ with $\alpha\in F$. Note that $h_\alpha:\alpha\rightarrow \{0,1\}$ and is well defined by (3) above and
property (1) of Definition \ref{cons.sch}. Now let
\begin{equation}\label{suslin.tree} \SS=(h_\alpha\restriction\delta:\delta\leq\alpha<\omega_1)\end{equation}

\begin{proof}[Proof of Theorem \ref{suslin}]

The functions $f_\alpha^F,g_\alpha^F (\alpha<\omega_1)$ will be defined by recursion based on the rank of $F\in\calF$.

For $\rho^F=0$ we have $F=\{\alpha\}$ and we let $f_\alpha^F(\alpha)=0$ and $g_\alpha^F(\alpha)=1$.

Let $F\in\calF$ with $\rho^F>0$, $R(F)=R$. Let $F=\bigcup_{i<n}F_i$
be its canonical decomposition and for all $i<n$, $f_\alpha^{F_i},g_\alpha^{F_i}$ are defined for all $\alpha\in F_i$
satisfying (1)--(4).

Let $\varphi_i:F_0\rightarrow F_i$be the increasing bijection between $F_0$ and $F_i$.

For $\alpha\in R$ let $f_\alpha^F=\bigcup_{i<n}\varphi_i(f_\alpha^{F_0})$ and $g_\alpha^F=\bigcup_{i<n}\varphi_i(g_\alpha^{F_0})$.

\vspace{.5cm}
For $\delta\in F_{2i}\setminus R$ and $\delta=\varphi_{2i}(\alpha)$ for some $\alpha\in F_0$ let
\begin{align*}
f_\delta^F=\bigcup_{j\leq 2i}\varphi_j(f_\alpha^{F_0})\cup\bigcup_{j>2i}\varphi_j(g_\alpha^{F_0}) \\
g_\delta^F=\bigcup_{j<2i}\varphi_j(f_\alpha^{F_0})\cup\bigcup_{j\geq 2i}\varphi_j(g_\alpha^{F_0})
\end{align*}

\vspace{.5cm}
For $\delta\in F_{2i+1}\setminus R$ and $\delta=\varphi_{2i+1}(\alpha)$ for some $\alpha\in F_0$ let
\begin{align*}
f_\delta^F=\bigcup_{j< 2i}\varphi_j(g_\alpha^{F_0})\cup\bigcup_{j\geq 2i}\varphi_j(f_\alpha^{F_0}) \\
g_\delta^F=\bigcup_{j\leq 2i}\varphi_j(g_\alpha^{F_0})\cup\bigcup_{j>2i}\varphi_j(f_\alpha^{F_0})
\end{align*}

It follows that for every $i<n$ and every $\alpha\in F_i$, $f_\delta^{F_i}\subset f_\delta^F$ and $g_\alpha^{F_i}\subset g_\alpha^{F}$
and (1)--(4) are preserved. This finish the recursion.

Let $\SS\subset 2^{<\omega_1}$ be as in \eqref{suslin.tree}.

\begin{claim}\label{suslintree}
            If $\calF$ is a 3-capturing construction scheme, then $\SS$ is a Suslin tree.
\end{claim}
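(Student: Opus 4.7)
The plan is to verify that $\SS$ is a tree of height $\omega_1$ under inclusion, and then rule out uncountable antichains by exploiting $3$-capturing; countable levels and absence of uncountable branches will follow from the same style of argument.

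First, the coherence conditions (1)--(4) imposed on $(f_\alpha^F, g_\alpha^F)$ in the recursion force $h_\alpha\colon\alpha\to\{0,1\}$ to be a well-defined function for every $\alpha<\omega_1$, and make its proper restrictions $h_\alpha\restriction\delta$ ($\delta<\alpha$) form the set of $\SS$-predecessors of $h_\alpha$ under $\subseteq$. Hence $\SS$ is a tree, and since $h_\alpha$ lies at level $\alpha$, it has height $\omega_1$.

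Suppose toward contradiction that $\{t_\xi:\xi<\omega_1\}\subseteq\SS$ is an uncountable antichain with $t_\xi=h_{\alpha_\xi}\restriction\delta_\xi$ for some $\delta_\xi\leq\alpha_\xi$. Code each element by $s_\xi=\{\delta_\xi,\alpha_\xi\}$ and, after thinning, view $(s_\xi)_{\xi\in\Gamma}$ as an uncountable increasing $\Delta$-system with root $s$. By $3$-capturing, pick $F\in\calF$ of rank $k>0$ with canonical decomposition $F=\bigcup_{j<n_k}F_j$ and indices $\xi_0<\xi_1<\xi_2$ in $\Gamma$ such that $s_{\xi_i}\setminus s\subseteq F_i\setminus R(F)$ and the order-isomorphism $\varphi_i\colon F_0\to F_i$ satisfies $\varphi_i(\alpha_{\xi_0})=\alpha_{\xi_i}$ and $\varphi_i(\delta_{\xi_0})=\delta_{\xi_i}$ for $i=0,1,2$.

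The core of the argument is a finite case analysis using the even/odd recipe that defined $f^F_\delta,g^F_\delta$. Combined with property (3), the explicit formulas force the restriction of each $f_{\alpha_{\xi_i}}^F$ to $F_j$ to be one of $\varphi_j(f_{\alpha_{\xi_0}}^{F_0})$ or $\varphi_j(g_{\alpha_{\xi_0}}^{F_0})$, and by condition (1) these two functions agree on $F_0\cap\alpha_{\xi_0}$. Tracking the assignment of $f$'s and $g$'s through the three pieces shows that for some $i<j$ in $\{0,1,2\}$ the restrictions $h_{\alpha_{\xi_i}}\restriction F$ and $h_{\alpha_{\xi_j}}\restriction F$ coincide on $F\cap\delta_{\xi_i}$. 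Property (4), which says that the functions $f_\alpha^{\,\cdot}$ grow coherently along the inclusion of elements of $\calF$, then allows one to promote this local agreement to $h_{\alpha_{\xi_i}}\restriction\delta_{\xi_i}=h_{\alpha_{\xi_j}}\restriction\delta_{\xi_i}$: any $\xi<\delta_{\xi_i}$ sits inside some $F^*\supseteq F$ in $\calF$ (by Definition 2.2(1)), and the same comparison repeated inside $F^*$ forces agreement at $\xi$. Consequently $t_{\xi_i}\subseteq t_{\xi_j}$ in $\SS$, contradicting that $\{t_\xi\}$ is an antichain.

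Countable levels now follow, since each level of $\SS$ is itself an antichain, and a variant of the same $3$-capturing argument applied to the finite codes for a putative $\omega_1$-branch rules out uncountable chains, giving the full Suslin property. The main obstacle is the finite computation at the heart of the previous paragraph: carefully reading off the even/odd recipe to identify which of the three pairs $(\xi_0,\xi_1),(\xi_0,\xi_2),(\xi_1,\xi_2)$ is forced into comparability, and verifying that the agreement established on $F\cap\delta_{\xi_i}$ propagates past $F$ via the coherence of the scheme.
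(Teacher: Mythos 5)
Your overall strategy (height $\omega_1$ is immediate; use capturing to kill uncountable antichains; then use countable levels plus capturing to kill uncountable chains) is the same as the paper's, but both of the computations that actually make the strategy work are left undone, and the second one is misdescribed. For antichains the paper simply captures the singletons $\{\alpha\}$, $\alpha\in\Gamma$: if $F$ captures $\{\alpha\}$ and $\{\beta\}$ with $\beta=\varphi_1(\alpha)$, the recursion is arranged so that $h_\alpha\subseteq h_\beta$, whence $h_\alpha\restriction\delta_\alpha$ and $h_\beta\restriction\delta_\beta$ are automatically comparable. Your version, capturing the pairs $\{\delta_\xi,\alpha_\xi\}$, also works, but the extra coordinate buys nothing, and the ``finite case analysis'' that you defer as ``the main obstacle'' is precisely the entire content of the step: one must read off from the definitions of $f^F_\delta$ and $g^F_\delta$ that the copy of $h_{\alpha_{\xi_0}}$ sitting over $F_0$ is an initial part of $h_{\alpha_{\xi_1}}$, and then use the coherence condition (4) together with property (1) of Definition~\ref{cons.sch} to see that this agreement persists on all of $\alpha_{\xi_0}$ and not merely on $F\cap\alpha_{\xi_0}$. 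Asserting that ``tracking the assignment shows that some pair is forced into comparability'' is not a proof of that fact.

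The more serious gap is the chain case. It is not ``a variant of the same argument'': to kill antichains you need capturing to produce a \emph{comparable} pair, while to kill chains you need it to produce an \emph{incomparable} pair, and the construction is engineered to deliver both simultaneously from a captured triple $\alpha_0<\alpha_1<\alpha_2$ --- this is exactly where $3$-capturing, as opposed to $2$-capturing, is used, and your proposal never identifies it. Concretely, the alternation of $f$'s and $g$'s across the pieces $F_j$ forces $h_{\alpha_1}(\alpha_0)=g^{F_0}_{\alpha_0}(\alpha_0)=1$ while $h_{\alpha_2}(\alpha_0)=f^{F_0}_{\alpha_0}(\alpha_0)=0$, so $h_{\alpha_1}$ and $h_{\alpha_2}$ split at $\alpha_0$. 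Moreover, for this splitting to be witnessed by the given nodes $h_{\alpha_i}\restriction\delta_{\alpha_i}$ one must first thin $\Gamma$ (using that the levels are countable, which follows from the antichain case) to an uncountable $\Gamma_0$ with $\alpha<\delta_\beta$ for all $\alpha<\beta$ in $\Gamma_0$; otherwise $\delta_{\alpha_1}$ could lie below $\alpha_0$ and the disagreement would be invisible to the nodes in question. Neither the splitting mechanism nor this thinning step appears in your write-up, so as it stands the proposal does not establish that $\SS$ has no uncountable chain --- indeed it gives no reason why $\SS$ could not consist of a single uncountable branch.
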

\begin{proof}
It is clear that $\SS$ has height $\omega_1$ since for every $\alpha<\omega_1$, $h_\alpha\in \SS$. Next we see that $\SS$ has neither uncountable antichains not uncountable chains.

Let $W=(h_\alpha\restriction\delta_\alpha:\delta_\alpha\leq\alpha,\ \alpha\in\Gamma)\subset \SS$ with $\Gamma\subset\omega_1$ uncountable.

There are $\alpha<\beta$ in $\Gamma$ and $F\in\calF$ such that $F$ captures $\alpha$ and $\beta$. In particular $\beta=\varphi_1(\alpha)$
and then $h_\alpha\subset h_\beta$ which implies $(h_\alpha\restriction\delta_\alpha)\not\perp (h_\beta\restriction\delta_\beta)$.
This implies $\SS$ has no uncountable antichains.

In particular, the levels of $\SS$ are countable and we can find an uncountable $\Gamma_0\subset\Gamma$ such that for every $\alpha<\beta$ in $\Gamma_0$, $\alpha<\delta_\beta$.
Let $F\in\calF$,\  3-capture $\Gamma_0$. Thus there are $\alpha_0<\alpha_1<\alpha_2$ in $\Gamma_0$ captured by $F=\bigcup_{i<n_k}F_i$.
By the construction we have that $h_{\alpha_1}(\alpha_0)=g^{F_0}_{\alpha_0}(\alpha_0)=1$ and 
$h_{\alpha_2}(\alpha_0)=f^{F_0}_{\alpha_0}(\alpha_0)=0$ and since $\alpha_0<\delta_{\alpha_1},\delta_{\alpha_2}$ then $h_{\alpha_1}\perp h_{\alpha_2}$. Thus $\SS$ does not have uncountable chains. 
\end{proof}
We showed that $\SS$ is a Suslin tree which is what we wanted.
\end{proof}

\section{T-gap}

We construct a T-gap by recursively building finite approximations $(a^F_\alpha,b^F_\alpha:\alpha\in F)$ and $(N_k)_{k<\omega}$ such that

\begin{enumerate}
\item For $\rho^F=k$ and every $\alpha\in F$, $a^F_\alpha,b^F_\alpha\subset N_k$ and $a^F_\alpha\cap b^F_\alpha=\emptyset$.

\item For $E,F\in\calF$, $\rho^E=\rho^F$. If $\alpha\in E$ and $\bar{\alpha}=\varphi_{E,F}(\alpha)$ then
\begin{align*}
                a^{E}_\alpha &= a^{F}_{\bar{\alpha}} \\
								b^{E}_\alpha &= b^{F}_{\bar{\alpha}}
\end{align*}
\item If $E\subset F$ with $\rho^E=l <\rho^F$, then
\begin{enumerate}
                  \item For every $\alpha\in E$, $\quad a^F_\alpha\cap N_l=a^E_\alpha$ and $b^F_\alpha\cap N_l=b^E_\alpha$.
									\item For every $\alpha<\beta$ in  $E$, $\quad a^F_\alpha\setminus N_l\subset a^F_\beta$ and
									      $b^F_\alpha\setminus N_l\subset b^F_\beta$.
								  \item For every $\alpha,\beta\in E$, $\quad a^F_\alpha\cap b^F_\beta\subset N_l$.
\end{enumerate}

\end{enumerate}

\begin{proof}[Proof of Theorem \ref{t.gap}]
For $F=\{\alpha\}$ let $a_\alpha^{F}=\{0\}$ and $b_\alpha^{F}=\{1\}$ and $N_0=2$.

Suppose that $(a_\alpha^E,b_\alpha^E: \alpha\in E, \rho^E <k)$ satisfies (1)--(3).
For $F\in\calF$, $\rho^F=k$, if
$$ F=\bigcup_{i<n}F_i\mbox{ is the canonical decomposition of $F$.}$$
\vspace{.5cm}

\noindent For $\alpha\in R(F)$ let $a^F_\alpha=a^{F_0}_\alpha$ and $b^F_\alpha=B^{F_0}_\alpha$.
\vspace{.5cm}

\noindent For $\delta\in F_{2i}\setminus R(F)$ and $\delta=\varphi_{2i}(\alpha)$ for some $\alpha\in F_0$ let
\begin{align*}
                 a^F_\delta &= a^{F_0}_\alpha \cup \{N_{k-1}\} \\
								 b^F_\delta &= b^{F_0}_\alpha \cup \{N_{k-1}+1 \}
\end{align*}

\vspace{.5cm}

\noindent For $\delta\in F_{2i+1}\setminus R(F)$ and $\delta=\varphi_{2i+1}(\alpha)$ for some $\alpha\in F_0$ let
\begin{align*}
                 a^F_\delta &= a^{F_0}_\alpha \cup \{N_{k-1}+1\} \\
								 b^F_\delta &= b^{F_0}_\alpha \cup \{N_{k-1} \}
\end{align*}

\noindent Let $N_k=N_{k-1}+2$. It is clear that $a^F_\delta\cap b^F_\delta=\emptyset$ and (1)--(3) are satisfied.
This finish the recursion
\vspace{.5cm}

For $\alpha<\omega_1$ let
$$a_\alpha=\bigcup\{a^F_\alpha:\alpha\in F\in\calF\} \qquad b_\alpha=\bigcup\{b^F_\alpha:\alpha\in F\in\calF\} $$

Conditions (1)--(3) imply that $(a_\alpha,b_\alpha)_{\alpha<\omega_1}$ is a pre-gap.

We use Proposition~\ref{ramseygapcondition} and Definition~\ref{t.gap.def} to see that $(a_\alpha,b_\alpha)_{\alpha<\omega_1}$ is a T-gap.
Let $\Gamma\subset\omega_1$ uncountable. Since $\calF$ is 3-capturing there is
$F\in\calF$ of rank $k$ and $\alpha_0<\alpha_1<\alpha_2$ in $\Gamma$ captured by $F$ i.e, $\alpha_i\in F_i\setminus R(F)$  for $i<3$ and $\alpha_j=\varphi_j(\alpha_0)$ for $j=1,2$. By the construction of $a_{\alpha_i},b_{\alpha_i} (i<3)$ we have that
$a_{\alpha_i}\cap N_k= a_{\alpha_i}^F$ and $b_{\alpha_i}\cap N_k=a_{\alpha_i}^F$. This and (b) of (3) give
\begin{gather} \label{eq.gap.1}
                   a_{\alpha_0}\cap b_{\alpha_1}\neq\emptyset \\ \label{eq.gap.2}
									 a_{\alpha_0}\subset a_{\alpha_2}\quad\mbox{and}\quad b_{\alpha_0}\subset b_{\alpha_2}
\end{gather}
Equation \eqref{eq.gap.1} implies $(a_\alpha,b_\alpha)_{\alpha<\omega_1}$ is a gap and by \eqref{eq.gap.2}  it is a T-gap as we wanted to see.
\end{proof}

\section{T-gaps vs S-gaps}

We prove Theorem~\ref{model.gaps}.
\begin{theo}
            There is a model in which there is an S-gap but which does not have any T-gaps.
\end{theo}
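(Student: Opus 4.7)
The approach is to construct the required model by a countable-support iteration of proper forcings of length $\omega_2$. Start with a ground model $V_0\models\diamondsuit$, so by \cite{Todor} there is a $3$-capturing construction scheme $\calF$ in $V_0$, and by Theorem~\ref{t.gap} there is an S-gap $G_0=(a_\alpha,b_\alpha)_{\alpha<\omega_1}$ built from $\calF$. We perform an iteration $\langle \PP_\xi,\dot{\mathbb Q}_\xi:\xi<\omega_2\rangle$ aimed at killing every T-gap appearing in an intermediate extension while preserving $G_0$ as a destructible $(\omega_1,\omega_1)$-gap.

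At stage $\xi$, standard bookkeeping selects a $\PP_\xi$-name $\dot H_\xi=(\dot a^\xi_\alpha,\dot b^\xi_\alpha)_{\alpha<\omega_1}$ for a pre-gap forced to be a T-gap, and $\dot{\mathbb Q}_\xi$ is defined as the proper forcing whose generic is an uncountable $\Gamma_\xi\subseteq\omega_1$ containing no pair $\alpha<\beta$ with both $\dot a^\xi_\alpha\subseteq\dot a^\xi_\beta$ and $\dot b^\xi_\alpha\subseteq\dot b^\xi_\beta$. Conditions are pairs $(s,\mathcal M)$ where $s\subseteq\omega_1$ is finite and contains no such T-pair of $\dot H_\xi$, and $\mathcal M$ is a finite $\in$-chain of countable elementary submodels of a sufficiently large $H(\theta)$ reflecting $\dot H_\xi$; the order is pointwise refinement enforced by the usual genericity clause on new elements relative to $\mathcal M$. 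Properness follows from the standard submodels-as-side-conditions argument, and ``being a non-T-witness'' is preserved upward under further proper forcing, so bookkeeping over $\omega_2$ stages ensures that no T-gap survives in the final extension.

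The central obstacle is preserving $G_0$ as an S-gap throughout the iteration: by induction on $\xi$ one must verify both that (a) $G_0$ remains an $(\omega_1,\omega_1)$-gap, i.e., no $\dot{\mathbb Q}_\xi$ adds a filler to $G_0$, and that (b) the Ramsey criterion of Proposition~\ref{suslingapcondition}(3) continues to hold for $G_0$. Both are to be established by a reflection argument: a purported filler for $G_0$, or an uncountable S-antihomogeneous subset of $\omega_1$, produced in $V^{\PP_{\xi+1}}$ would, upon pulling back through a side-condition submodel $N\in\mathcal M$, yield a corresponding object already in $V^{\PP_\xi}\cap N$, contradicting the inductive hypothesis. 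The role of $3$-capturing is decisive here: it guarantees that each reflecting submodel contains a sufficiently rich family of finite approximations of $G_0$ to detect any candidate filler or S-antihomogeneous set. Carrying out this preservation lemma, and in particular verifying part (b) for the Ramsey S-condition, is the main technical content of the argument.
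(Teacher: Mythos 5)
Your high-level strategy (iterate $\omega_2$ times to kill every T-gap while preserving one fixed S-gap) is the same as the paper's, but the proposal has a genuine gap exactly where the theorem lives: the preservation of the S-gap is not proved, and the sketch you offer for it would not work. Saying that a filler or an S-antihomogeneous set in $V^{\PP_{\xi+1}}$ ``pulls back through a side-condition submodel $N$'' to an object in $V^{\PP_\xi}\cap N$ is not an argument --- uncountable sets of the extension do not reflect into countable submodels of the ground model this way. Worse, the appeal to $3$-capturing here is misplaced: capturing is a ground-model combinatorial property of $\calF$ used only to \emph{build} the gap; it says nothing about what happens to the gap under subsequent forcing, and there is no reason it survives the iteration. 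What is actually needed, and what the paper supplies, is a concrete compatibility argument: given uncountably many conditions $p_\alpha$ each forcing $\alpha\in\dot\Gamma$, one fixes a countable $M\prec H_{\mathfrak c^+}$, uses the S-condition of the gap \emph{twice} (once to find $\beta>\gamma=\min(\Gamma\setminus M)$ orthogonal to $\gamma$, once inside $M$ on the uncountable set $\Gamma_0$ defined from finite traces) to produce $\alpha\in M$ with $p_\alpha\cup p_\gamma$ a condition, contradicting the assumption that the forcing destroys the S-property. This is the main technical content you explicitly defer.

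There is also an avoidable structural issue. You kill T-gaps by a side-condition proper poset whose conditions directly forbid T-pairs, and you assert properness and that the generic $\Gamma$ is uncountable without verification; neither is obvious, and this choice forces you into countable-support/proper preservation machinery. The paper sidesteps all of this: after normalizing so that $a_\beta\not\subseteq a_\alpha$ for $\alpha<\beta$, it forces with $\PP_{\gap}$, the poset of finite $\subseteq$-antichains of the family $(a_\alpha)_{\alpha<\omega_1}$ alone. This poset is ccc for \emph{every} gap (a short elementary-submodel argument), its generic uncountable set contains no pair with $a_\alpha\subseteq a_\beta$, hence no T-pair, and a finite-support iteration of length $\omega_2$ with GCH bookkeeping then yields ${\rm MA}_{\omega_1}$ for posets of this form, killing all T-gaps. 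If you want to salvage your write-up, the minimal fix is to replace your iterand by $\PP_{\gap}$ (or prove ccc-ness and the uncountability of the generic for your poset) and then actually prove the preservation claim along the lines above, including preservation at limit stages.
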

\begin{proof}
We start with a ground model in which GCH holds and has an S-gap.

Let $(a_\alpha,b_\alpha)_{\alpha<\omega_1}$ be a gap
with the property that $a_\beta\not\subset a_\alpha$ for any $\alpha<\beta<\omega_1$. It is clear that every gap is equivalent to a gap with this property. Let $\gap=(a_\alpha)_{\alpha<\omega_1}$ and consider the following forcing notion
$$
\mathbb P_\gap=\{p\in[\gap]^{<\omega}: (\forall x\neq y\in p)\ x\not\subset y \mbox{ and }y\not\subset x\}
$$
ordered by reversed inclusion.

\begin{claim}
$\PP_\gap$ is ccc.
\end{claim}
\begin{proof}
Let $(p_\alpha)_{\alpha<\omega_1}$. Applying the $\Delta$-system Lemma we can assume that the $p_\alpha$'s are a disjoint
with $|p_\alpha|=n$ and $p_\alpha=(x_{\alpha,i})_{i<n}$ for every $\alpha<\omega_1$ where we preserved the natural order in $\gap$.
This implies that $x_{\beta,j}\not\subset x_{\alpha,i}$ for $\alpha<\beta$ and $i,j<n$.

Let $M$ be a countable elementary submodel of $H_{\mathfrak c^+}$ and $\gamma=\omega_1\cap M$.

Take $\beta>\gamma$ and fix $k<\omega$ such that
\begin{equation}\label{ccc.eq.1}
x_{\beta,i}\cap k\not\subset x_{\gamma,i}\quad\forall i<n.
\end{equation}

Consider $\Gamma=\{\alpha<\omega_1:x_{\alpha,i}\cap k= x_{\beta,i}\cap k\quad\forall i<n\}$, then $\Gamma\in M$ and $\beta\in\Gamma$.
Therefore $\Gamma$ is uncountable. Take $\alpha\in M\cap\Gamma$, by \eqref{ccc.eq.1}
$$ x_{\alpha,i}\not\subset x_{\gamma,i}\quad\forall i<n$$
and $p_\alpha\cup p_\gamma\in\PP_\gap$ witness $p_\alpha\not\perp p_\gamma$.
\end{proof}

We will force a model where $\mbox{MA}_{\omega_1}$ holds for a forcing of the form $\PP_\gap$. First, fix a bijective mapping $\pi:\omega_2\rightarrow\omega_2\times\omega_2$ where $\pi(\alpha)=(\beta,\gamma)$ with $\beta\leq\alpha$. This is the usual book keeping mapping. Suppose we have $\PP_\lambda=\langle\PP_\alpha,\dot{\mathbb Q}_\alpha:\alpha<\lambda \rangle$ a finite support iteration with
$$\PP_\alpha\Vdash ``\dot{\mathbb Q}_\alpha=\PP_{\dot{\gap}}\mbox{ if $\dot{\gap}$ is a gap}".$$
for some $\dot{\gap}\in V^{\PP_\alpha}$. Then, in $V^{\PP_\lambda}$ there are $\aleph_2$ many names for gaps (by GCH), and we can fix a well-ordering of them.
If $\pi(\lambda)=(\beta,\gamma)$, let $\dot{\gap}$ be the $\gamma^{th}$ name for a gap in $V^{\PP_\beta}$. If $\dot{\gap}$ is a gap in $V^{\PP_\lambda}$ then let $\dot{\mathbb Q}_\lambda=\PP_{\dot{\gap}}$.

\begin{claim}
               The finite support iteration $\PP_{\omega_2}$ is ccc and forces {\upshape MA}$_{\omega_1}$ for orderings of the form $\PP_\gap$.
\end{claim}
\begin{proof}
Let $\gap$ and $\vec{\mathcal D}=(D_\alpha:\alpha<\omega_1)$ be a gap  and a collection of dense sets of $\PP_{\dot{\gap}}$ in $V[G_{\omega_2}]$ respectively. Then, there is $\lambda<\omega_2$ such that both $\gap$ and $\vec{\mathcal D}$ are in $V[G_\lambda]$. Since $\gap$ is a gap in $V[G_{\omega_2}]$ then is a gap in $V[G_\lambda]$ and there is $\xi\geq\lambda$ such that $\pi(\xi)=(\lambda,\gamma)$ and the $\gamma^{th}$ name in $V^{\PP_\lambda}$ is a name for $\gap$. It follows that there is a $\vec{\mathcal D}$-generic filter in $V[G_{\xi+1}]\subset V[G_{\omega_2}]$ and the proof is finished.  
\end{proof}

This applied to a gap $(a_\alpha,b_\alpha)_{\alpha<\omega_1}$ forces $\Gamma\subset\omega_1$ uncountable without the property in
Definition~\ref{t.gap}. This shows that there are no T-gaps. Thus, the proof  is finished once we show the following.

\begin{claim}
              Forcing with $\mathbb P_\gap$ preserves S-gaps.
\end{claim}
\begin{proof}
Suppose that one $\mathbb P_\gap$ kills an S-gap $(a_\alpha,b_\alpha)_{\alpha<\omega_1}$.

Then $\PP_\gap$ forces $\dot{\Gamma}\subset\omega_1$ uncountable without property (3) of Proposition~\ref{suslingapcondition} i.e,
for every $\alpha<\beta$
$$ \PP_\gap\Vdash\alpha,\beta\in\dot{\Gamma}\Rightarrow (a_\alpha\cap b_\beta)\cup(a_\beta\cap b_\alpha)\neq\emptyset$$

Since $\dot{\Gamma}$ is uncountable we can find (in the ground model) $\Gamma\subset\omega_1$ uncountable and 
$(p_\alpha:\alpha\in\Gamma)\subset\PP_\gap$ such that
$$ p_\alpha\Vdash\alpha\in\dot{\Gamma}$$

In particular, we have
\begin{equation}\label{Claim.eq.1}
\forall\alpha<\beta\in \Gamma\quad\Bigl( (a_\alpha\cap b_\beta)\cup(a_\beta\cap b_\alpha)=\emptyset
\Longrightarrow p_\alpha\cup p_\beta\notin\mathbb P_\gap \Bigr)
\end{equation}

We may assume that the $p_\gamma$'s are disjoint and that they all have some fixed size $n$
and $p_\alpha=(x_{\alpha,i})_{i<n}$ preserves the natural order in $\gap$.

Choose a countable elementary sub-model $M$ of $H_{c^+}$ containing all these objects and let $\gamma=\min (\Gamma\setminus M)$.

Since $(a_\alpha,b_\alpha)_{\alpha<\omega_1}$ is an S-gap, the elementarity of $M$ gives us the existence of a $\beta\in\Gamma$
above $\gamma$ such that
\begin{equation}\label{Claim.eq.3}
a_\beta\cap b_\gamma =\emptyset \mbox{ and } a_\gamma\cap b_\beta=\emptyset
\end{equation}

Choose $k<\omega$ such that

\begin{gather}\label{Claim.eq.4}
a_\gamma\setminus k\subseteq a_\beta \mbox{ and } b_\gamma\setminus k\subseteq b_\beta \\
\label{Claim.eq.5} \forall x\in p_\gamma\quad\forall y\in p_\beta\quad y\cap k\not\subseteq x\cap k
\end{gather}
Let $s=a_\beta\cap k$, $t=b_\beta\cap k$ and
$$\Gamma_0=\{\alpha\in\Gamma: a_\alpha\cap k=s\quad b_\alpha\cap k=t\quad x_{\alpha,i}\cap k=x_{\beta,i}\cap k (i<n)\}$$
Then $\Gamma_0\in M$ and $\beta\in\Gamma_0\setminus M$ so $\Gamma_0$ is an uncountable subset of $\Gamma$.
Since $(a_\alpha,b_\alpha)_{\alpha<\omega_1}$ is a S-gap there must exist $\alpha\in\Gamma_0\cap M$ such that
\begin{equation}\label{Claim.eq.6}
a_\alpha\cap b_\beta =\emptyset \qquad a_\beta\cap b_\alpha=\emptyset
\end{equation}

Combining equations \eqref{Claim.eq.3},\eqref{Claim.eq.4} and \eqref{Claim.eq.6} we obtain that
\begin{equation}\label{Claim.eq.7}
a_\alpha\cap b_\gamma=\emptyset \qquad a_\gamma\cap b_\alpha=\emptyset
\end{equation}
Form the fact that $\alpha\in\Gamma_0$ and by \eqref{Claim.eq.5} we conclude that
\begin{equation}\label{Claim.eq.8}
\forall x\in p_\gamma\quad\forall y\in p_\alpha\quad y\not\subseteq x
\end{equation}
Thus $p_\alpha\cup p_\gamma\in\mathbb P_W$, contradicting \eqref{Claim.eq.1}.
\end{proof}

The previous claim also implies that if $\PP_\alpha$ preserves S-gaps, then so does $\PP_{\alpha+1}$.
Suppose now $\PP_\alpha$ preserves S-gaps for every $\alpha<\lambda\leq\omega_2$, with $\lambda$ limit.
If $\PP_\lambda$ kills an S-gap, applying the $\Delta$-system lemma
(or a counting argument in case $\alpha$ has countable cofinality) we find $\eta<\lambda$ such that 
$\PP_\eta$ kills an S-gap. Contradiction, thus $\PP_\lambda$ also preserves S-gaps.

This shows that $V[G_{\omega_2}]$ contains an S-gap, since $V$ does and $\PP_{\omega_2}$ preserves it,
and there are no T-gaps in $V[G_{\omega_2}]$ which finish the proof.
\end{proof}

\begin{rem}
The method above answers a particular case of Problem 59 of \cite{BC}. Particularly, it produces a
model of with no Suslin towers but destructible gaps.
\end{rem}

\end{document}